\newtheorem{theorem}{Theorem}[section]
\newtheorem{corollary}[theorem]{Corollary}
\newtheorem{lemma}[theorem]{Lemma}
\newtheorem{definition}[theorem]{Definition}
\newtheorem{example}[theorem]{Example}
\newcommand{\qed}{\hfill $\square$\medskip}
\begin{document}

\title{ Domination polynomial and total domination polynomial of zero-divisor graphs of commutative rings}

\author{
Saeid Alikhani$^{}$\footnote{Corresponding author}
\and
Fatemeh Aghaei
}

\date{\today}

\maketitle

\begin{center}
Department of Mathematical Sciences, Yazd University, 89195-741, Yazd, Iran\\
{\tt alikhani@yazd.ac.ir, aghaeefatemeh29@gmail.com}
\end{center}


\begin{abstract}
 The domination polynomial (the total domination polynomial) of a graph $ G $ of order $ n $ is the generating function of  the number of  dominating sets (total dominating sets) of $ G $ of any size.   
 In this paper, we study the domination polynomial and the total domination polynomial  of zero-divisor graphs of the ring $ \mathbb{Z}_n $ where $ n\in\lbrace 2p, p^2, pq, p^2q, pqr, p^\alpha\rbrace $, and $ p, q, r $ are primes with $ p>q>r>2 $. 

\end{abstract}

\noindent{\bf Keywords:} Domination polynomial, Total domination polynomial, Zero-divisor graph.
\medskip
\noindent{\bf AMS Subj.\ Class.}: 05C69, 05C25.

\section{Introduction}
Let $ G $ be a simple graph. For any vertex $ v\in V(G) $, the open neighborhood of $ v $ is the set $ N(v)=\lbrace u\in V(G)\vert u\sim v\rbrace $ and the closed neighborhood is the set $ N[v]=N(v)\cup\lbrace v\rbrace$. For a set $ S\subseteq V $, the open neighborhood of $ S $ is $ N(S)=\bigcup_{v\in S}N(v) $ and the closed neighborhood of $ S $ is $ N[S]=N(S)\cup S $. A set $ S\subseteq V $ is a dominating set if $ N[S]=V $, or equivalently, every vertex in $ V\setminus S $ is adjacent to at least one vertex in $ S $. The domination number $ \gamma(G) $ is the minimum cardinality of the dominating set in $ G $. The total dominating set is a subset $ D $ of $ V $ that every vertex of $ V $ is adjacent to some vertices of $ D $. The total domination number of $ G $ is equal to minimum cardinality of total dominating set in $ G $ and denoted by $ \gamma_{t}(G) $. An $ i $-subset of $ V(G) $ is a subset of $ V(G) $ of cardinality $ i $. Let $ \mathcal{D}(G,i) $ be the family of dominating sets of $ G $ which are $ i $-subsets and let 
$ d(G,i)=\vert \mathcal{D}(G,i)\vert $.
The polynomial $$ D(G,x)=\sum_{i=0}^{n}d(G,i)x^{i}, $$ is defined as domination polynomial of $ G $  (\cite{1,2}). Likewise, the total domination polynomial of $ G $ is 
$$ D_t(G,x)=\sum_{i=0}^{n}d_t(G,i)x^{i},$$ where $ d_t(G,i) $ is the number of total dominating sets of $ G $ of size $ i $ (\cite{6}).

The domination polynomial, the total domination polynomial and their roots were well-studied in literature, see e.g., \cite{Brown}. Also the classification of graphs based dominating equivalence is an interesting problem and many graphs determine by their domination polynomials  \cite{complete}.

 Similar to the domination polynomial, the independent domination polynomial of a graph $G$ is a generating function of number of the independent dominating sets of $G$ \cite{7}. The simplicial complexes arising from zero-divisor graphs has been studied in \cite{Demeyer}. The independent sets of 
 a graph is correspond to the faces of its independence complex, and so 
 the independent and independent domination polynomials of
 a graph provide a face enumeration for its simplicial complex \cite{Klee}.
 
Zero-divisor graph of a commutative ring was introduced in the work of Beck in \cite{Beck}. Beck was interested in coloring of rings and the vertex set of graph consists of the all elements of the ring in his definition. Later, the definition of zero-divisor graph of a commutative ring has been modified by Anderson and Livingston \cite{Anderson}. They defined the zero-divisor graph of a commutative ring on nonzero zero-divisor elements of the ring.

 In recent years, the study of zero divisor graphs has grown in various directions. Actually, it is the interplay between the ring theoretic properties of a ring $R$ and the
graph theoretic properties of its zero divisor graph \cite{2n,Anderson}. There are many papers which studied some parameters and topological indices of the zero-divisor graphs. For example we refer reader to see e.g., \cite{Ahmadi,Asir,Lucas}.
 Recently, G\"ursoy, Ulker and G\"ursoy in \cite{Gursoy} have studied  the independent domination polynomials of some zero-divisor graphs of the rings.

This paper is organized as follows. In Section 2, we give some notions about zero-divisor graphs. In Section 3, we investigate the  domination (total) polynomial of some zero-divisor graphs of the rings. We suppose that $ p, q, $ and $ r $ are distinct prime numbers such that $ p>q>r>2 $.

\section{Preliminaries}
This section is devoted to the definitions and notions which will be needed  for the rest of the paper. In the following, we give the definition of the zero-divisor graph.

\begin{definition}\rm{\cite{Anderson}}
 Let $ \mathbb{Z}_n $ be the ring of integers modulo $ n $. The zero-divisor graph $ \Gamma(\mathbb{Z}_n) $ is the simple undirected graph without loops which has its vertex set coincides with the nonzero zero-divisors of $ \mathbb{Z}_n $ and two distinct vertices $ u $ and $ v $ in $ \Gamma(\mathbb{Z}_n) $ are adjacent whenever $ uv=0 $ in $ \mathbb{Z}_n $.
\end{definition}

\begin{example} {\rm \cite{Gursoy}}
	For the graph $\Gamma(\mathbb{Z}_{75})$,  we have $|V(\Gamma(\mathbb{Z}_{75}))|= 34$ and $|E(\Gamma(\mathbb{Z}_{75}))| = 86$. This graph  has shown in Figure \ref{gn}. 
\end{example} 
\begin{figure}[ht]
	\centering
	\includegraphics[scale=0.6]{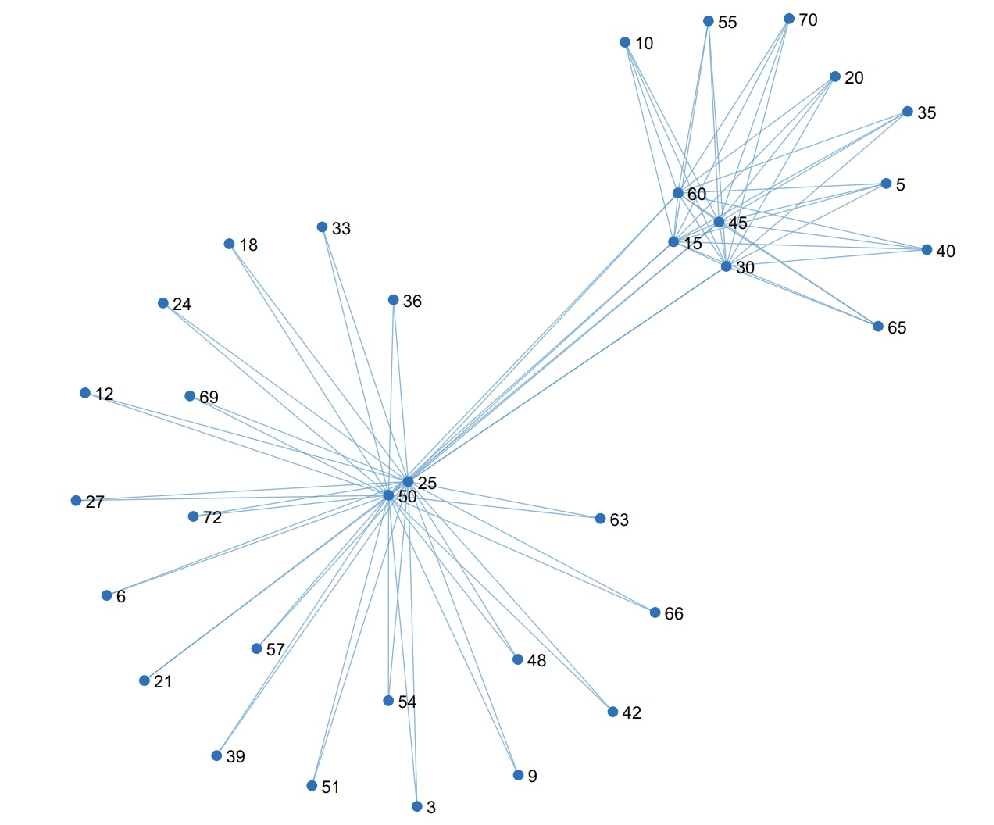}
	\caption{Zero-divisor graph $\Gamma(\mathbb{Z}_{75})$, see \cite{Gursoy}. } \label{gn} 
\end{figure} 

An integer $ d $ is called a \textit{proper divisor} of $ n $ if $ 1<d<n $ and $ d\mid n $. Let $ d_1,...,d_k $ be the distinct proper divisors of $ n $. For $ 1\leqslant i\leqslant k $, consider  the following sets:
\[
V_{d_i}=\lbrace x\in\mathbb{Z}_n:\gcd(x,n)=d_i\rbrace.
\]
The sets $ V_{d_1},...,V_{d_k} $ are pairwise disjoint and we can partition the vertex set of $ \Gamma(\mathbb{Z}_n) $ as 
\[
V(\Gamma(\mathbb{Z}_n) )= \bigcup_{i=1}^kV_{d_i}.
\]
The following lemma gives the cardinalities of each vertex subset of $ \Gamma(\mathbb{Z}_n) $.
\begin{lemma}\rm{\cite{Young}}
Let $ n $ be a positive integer with distinct divisors $ d_1, d_2,..., d_r $. If $ V_{d_i}=\lbrace x\in\mathbb{Z}_n:\gcd(x,n)=d_i\rbrace $ for $ i = 1, 2,...,r, $ then $ |V_{d_i}|=\phi(\frac{n}{d_i}) $, where $ \phi $ is the Euler's totient function.
\end{lemma}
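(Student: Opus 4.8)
The plan is to exhibit an explicit bijection between $V_{d_i}$ and the set $U_m := \{\, y \in \{0,1,\dots,m-1\} : \gcd(y,m)=1 \,\}$, where $m = n/d_i$; since $|U_m| = \phi(m)$ holds by the very definition of Euler's totient function, the claim $|V_{d_i}| = \phi(n/d_i)$ follows at once. Fix a divisor $d = d_i$ of $n$ and set $m = n/d$, so that $n = dm$.

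First I would note that any $x \in V_d$ satisfies $d \mid x$, so we may write $x = dy$ with a unique $y \in \{0,1,\dots,m-1\}$. The key arithmetic input is the scaling identity $\gcd(dy,\,dm) = d\cdot\gcd(y,m)$, valid for the positive integer $d$. Applying it gives $\gcd(x,n) = \gcd(dy,dm) = d\,\gcd(y,m)$, and this equals $d$ exactly when $\gcd(y,m) = 1$. Hence the assignment $\varphi : x \mapsto x/d$ maps $V_d$ into $U_m$.

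Conversely, for any $y \in U_m$ the integer $x := dy$ lies in $\{0,1,\dots,n-1\}$ and, by the same identity, $\gcd(x,n) = d\,\gcd(y,m) = d$, so $x \in V_d$ and $\varphi(x) = y$. Thus $\varphi$ is a bijection from $V_d$ onto $U_m$ (its inverse being $y \mapsto dy$), which yields $|V_d| = |U_m| = \phi(m) = \phi(n/d)$, as claimed.

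I do not expect any real obstacle: the only points deserving a line of justification are the gcd-scaling identity $\gcd(dy,dm) = d\,\gcd(y,m)$ and the remark that multiplication by $d$ gives a bijection between $\{0,1,\dots,m-1\}$ and the set of multiples of $d$ in $\{0,1,\dots,n-1\}$. The extreme cases are subsumed automatically: for $d = 1$ the set $V_1$ consists of the units of $\mathbb{Z}_n$ and $\phi(n/1) = \phi(n)$, while for $d = n$ we have $V_n = \{0\}$ and $\phi(n/n) = \phi(1) = 1$.
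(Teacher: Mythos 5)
Your proof is correct: the bijection $x \mapsto x/d$ between $V_{d}$ and the residues coprime to $m=n/d$, justified by the scaling identity $\gcd(dy,dm)=d\gcd(y,m)$, is exactly the standard argument for this counting fact. Note that the paper itself offers no proof of this lemma -- it is quoted from Young's article on adjacency matrices of zero-divisor graphs -- so there is nothing to diverge from; your write-up supplies the missing details cleanly, including the harmless edge cases $d=1$ and $d=n$ (only proper divisors are actually used later in the paper).
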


\begin{lemma}\rm{\cite{Chattopadhyay}}
For $ i,j\in\lbrace 1,...,k\rbrace $, a vertex of $ V_{d_i} $ is adjacent to a vertex of $ V_{d_j} $ in $ \Gamma(\mathbb{Z}_n) $ if and only if $ n $ divides $ d_id_j $. 
\end{lemma}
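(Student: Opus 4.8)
The plan is to reduce the adjacency condition to a divisibility statement about ordinary integers and then verify it one prime at a time. First I would write each vertex of $V_{d_i}$ as $x = d_i a$ with $\gcd(a, n/d_i) = 1$ and each vertex of $V_{d_j}$ as $y = d_j b$ with $\gcd(b, n/d_j) = 1$ (and conversely, every product of this shape lies in the corresponding class). Since $x$ and $y$ are adjacent in $\Gamma(\mathbb{Z}_n)$ precisely when $n \mid xy = d_i d_j a b$, the lemma reduces to the claim that $n \mid d_i d_j a b$ holds for some admissible pair $(a,b)$ if and only if $n \mid d_i d_j$; indeed the argument will show that in that case it holds for \emph{every} admissible pair, so that the classes $V_{d_i}$ and $V_{d_j}$ are completely joined.

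One implication is immediate: if $n \mid d_i d_j$, then $n \mid d_i d_j a b$ for all $a,b$, so every vertex of $V_{d_i}$ is adjacent to every (distinct) vertex of $V_{d_j}$. For the other implication, suppose $n \mid d_i d_j a b$ for one admissible pair $(a,b)$ and fix a prime $p$; let $\alpha,\beta,\gamma$ denote the exponents of $p$ in $n, d_i, d_j$ respectively, so that $\beta \le \alpha$ and $\gamma \le \alpha$ because $d_i, d_j \mid n$. I would show $\beta + \gamma \ge \alpha$. If not, then $\beta < \alpha$, so $p \mid n/d_i$, and the coprimality $\gcd(a, n/d_i) = 1$ forces $p \nmid a$; symmetrically $p \nmid b$. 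Hence the exponent of $p$ in $d_i d_j a b$ is exactly $\beta + \gamma < \alpha$, contradicting $n \mid d_i d_j a b$. So $\beta + \gamma \ge \alpha$ for every prime $p$, which is precisely $n \mid d_i d_j$.

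The only slightly delicate point is the valuation bookkeeping in the second implication — one has to invoke the two coprimality hypotheses with the correct exponents — but this is elementary. A small remark is in order when $i = j$: there adjacency also requires the two chosen vertices to be distinct, i.e. $|V_{d_i}| \ge 2$, but this caveat does not affect the stated equivalence, which only addresses when an edge between the classes can occur.
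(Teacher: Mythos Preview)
Your argument is correct. The paper does not prove this lemma at all; it simply quotes it from \cite{Chattopadhyay}, so there is no ``paper's own proof'' to compare against. Your prime-by-prime valuation argument is the natural one: writing $x=d_ia$, $y=d_jb$ with $\gcd(a,n/d_i)=\gcd(b,n/d_j)=1$ and then observing that whenever $v_p(d_i)+v_p(d_j)<v_p(n)$ both coprimality conditions force $p\nmid a$ and $p\nmid b$, so $v_p(d_id_jab)=v_p(d_id_j)<v_p(n)$, is exactly the right computation. Your remark about the $i=j$ case (where one also needs $|V_{d_i}|\ge 2$ to have an actual edge) is a reasonable caveat; the cited source and the paper's Corollary~\ref{induced} absorb this by allowing $K_1$ as the degenerate complete graph.
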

\begin{corollary}\rm{\cite{Chattopadhyay}}\label{induced}
\begin{itemize}
\item[i)]
For $ i\in\lbrace 1,...,k\rbrace $, the induced subgraph $ \Gamma(V_{d_i}) $ of $ \Gamma(\mathbb{Z}_n) $ on the vertex set $ V_{d_i} $ is either the complete graph $ K_{\phi({n}/{d_i})} $ or its complement graph $ \overline{K}_{\phi({n}/{d_i})} $ . Indeed, $ \Gamma(V_{d_i}) $ is $ K_{\phi({n}/{d_i})} $ if and only if $ n $ divides $ d_i^2 $.
\item[ii)]
For $ i,j\in\lbrace 1,...,k\rbrace $ with $ i\neq j $, a vertex of $ V_{d_i} $ is adjacent to either all or none of the vertices of $ V_{d_j} $ in $ \Gamma(\mathbb{Z}_n) $.
\end{itemize}
\end{corollary}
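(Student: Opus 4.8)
The plan is to derive both parts directly from the two preceding lemmas — Young's cardinality formula $|V_{d_i}|=\phi(n/d_i)$ and Chattopadhyay's adjacency criterion — the key observation being that the condition ``$n\mid d_id_j$'' in the adjacency criterion depends only on the divisor classes $d_i$ and $d_j$ and not on the particular vertices chosen from $V_{d_i}$ and $V_{d_j}$. Since the $\gcd$ of every element of $V_{d_i}$ with $n$ equals $d_i$ by definition, this divisibility test is constant on each (ordered) pair of classes, and the whole corollary is essentially a reformulation of that fact.

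For part (ii), I would fix $i\neq j$ and any $x\in V_{d_i}$. For an arbitrary $y\in V_{d_j}$, the adjacency lemma says $x\sim y$ if and only if $n\mid d_id_j$. As the right-hand condition mentions neither $x$ nor $y$, it holds for every $y\in V_{d_j}$ simultaneously or for none of them; hence $x$ is joined to all of $V_{d_j}$ when $n\mid d_id_j$ and to none of $V_{d_j}$ otherwise. This is exactly (ii), and it even gives the stronger statement that $V_{d_i}$ and $V_{d_j}$ together induce a complete bipartite graph or an edgeless one.

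For part (i), I would apply the adjacency lemma in the case $j=i$: two distinct vertices $x,y\in V_{d_i}$ are adjacent if and only if $n\mid d_i^2$. Here one should note that the ``$i=j$'' instance of the lemma is read for distinct vertices, since $\Gamma(\mathbb{Z}_n)$ is loopless; if one prefers, this can be checked directly by writing $x=d_ia$, $y=d_ib$ with $\gcd(a,n/d_i)=\gcd(b,n/d_i)=1$ and observing that $n\mid d_i^2ab$ forces $n/d_i\mid d_i$. Again the condition is independent of the choice of $x,y$, so $\Gamma(V_{d_i})$ is either complete (every pair adjacent, when $n\mid d_i^2$) or edgeless (no pair adjacent, when $n\nmid d_i^2$); its order is $|V_{d_i}|=\phi(n/d_i)$ by Young's lemma, which identifies the two cases as $K_{\phi(n/d_i)}$ and $\overline{K}_{\phi(n/d_i)}$ and yields the stated ``if and only if''.

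I do not expect a genuine obstacle: once one notices that the divisibility test is constant on each pair of classes, everything follows in a couple of lines. The only points deserving care are handling the loopless convention when $i=j$ and remembering to invoke Young's lemma to pin down the order of $\Gamma(V_{d_i})$; degenerate cases such as $|V_{d_i}|=1$ are automatically consistent because $K_1=\overline{K}_1$.
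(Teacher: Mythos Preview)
Your argument is correct: both parts follow immediately from the adjacency criterion (Lemma~2.4) once one observes that the condition $n\mid d_id_j$ depends only on the divisor classes, together with Young's lemma for the cardinality. Note, however, that the paper does not supply its own proof of this corollary---it is simply quoted from \cite{Chattopadhyay}---so there is nothing to compare against; your derivation is the natural one and is exactly how the result is obtained in the cited source.
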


\section{Main results} 
In this section, we study  domination (total) polynomial of zero-divisor graphs of rings $ \mathbb{Z}_n $, where $ n\in\lbrace 2p, p^2, pq, p^2q, pqr, p^{\alpha}\rbrace $ for distinct prime numbers $ p, q $ and $ r $. 
 
 \begin{theorem}
 For prime $ p $, 
 \begin{enumerate} 
 	\item[(i)] 
 	$ D(\Gamma(\mathbb{Z}_{p^2}),x)=(1+x)^{p-1}-1.$
 	
 	\item[(ii)]  
 	$ D_t(\Gamma(\mathbb{Z}_{p^2}),x)=(1+x)^{p-1}-(p-1)x-1.$
\end{enumerate} 
 \end{theorem}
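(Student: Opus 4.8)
The plan is to first pin down the graph $\Gamma(\mathbb{Z}_{p^2})$ exactly, and then read both polynomials off the combinatorics of that graph. The nonzero zero-divisors of $\mathbb{Z}_{p^2}$ are precisely the nonzero multiples of $p$, namely $p, 2p, \dots, (p-1)p$, so $\Gamma(\mathbb{Z}_{p^2})$ has $p-1$ vertices. Moreover $p$ is the only proper divisor of $p^2$, so there is a single divisor class $V_p = V(\Gamma(\mathbb{Z}_{p^2}))$ with $|V_p| = \phi(p^2/p) = p-1$, and since $p^2 \mid p^2 = d_1^2$, Corollary~\ref{induced}(i) gives that the induced subgraph on $V_p$ is $K_{p-1}$. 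As there is only one class, $\Gamma(\mathbb{Z}_{p^2}) \cong K_{p-1}$ (equivalently, the product of any two nonzero multiples of $p$ is $0$ in $\mathbb{Z}_{p^2}$). It then remains to compute $D(K_{p-1},x)$ and $D_t(K_{p-1},x)$.

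For (i): in $K_m$ every vertex is adjacent to every other vertex, so $S \subseteq V(K_m)$ is a dominating set precisely when $S \neq \emptyset$ --- a nonempty $S$ dominates everything outside itself, and the empty set dominates nothing. Hence $d(K_m,i) = \binom{m}{i}$ for $1 \le i \le m$ and $d(K_m,0) = 0$, so
\[
D(K_m,x) = \sum_{i=1}^{m}\binom{m}{i}x^{i} = (1+x)^m - 1 ,
\]
and the case $m = p-1$ gives (i).

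For (ii): a set $D \subseteq V(K_m)$ is a total dominating set iff every vertex $v$ has a neighbour in $D$, i.e.\ $D \setminus \{v\} \neq \emptyset$ for all $v$, which happens exactly when $|D| \ge 2$. Thus $d_t(K_m,i) = \binom{m}{i}$ for $2 \le i \le m$ and $d_t(K_m,0) = d_t(K_m,1) = 0$, so
\[
D_t(K_m,x) = \sum_{i=2}^{m}\binom{m}{i}x^{i} = (1+x)^m - 1 - mx ,
\]
and with $m = p-1$ this is exactly (ii).

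The argument has no genuine obstacle; the only care needed is in the boundary cases (the empty set for (i), and the empty set together with singletons for (ii)). As a sanity check, for $p = 2$ we get $\Gamma(\mathbb{Z}_4) \cong K_1$, a single isolated vertex, and the formulas correctly predict $D = x = (1+x)^1 - 1$ and $D_t = 0 = (1+x)^1 - x - 1$, consistent with the fact that a graph with an isolated vertex admits no total dominating set.
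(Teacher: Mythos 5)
Your proposal is correct and follows essentially the same route as the paper: identify $\Gamma(\mathbb{Z}_{p^2})$ as the complete graph $K_{\phi(p)}=K_{p-1}$ via Corollary~\ref{induced} and then apply the formulas $D(K_m,x)=(1+x)^m-1$ and $D_t(K_m,x)=(1+x)^m-mx-1$. The only difference is that you derive these complete-graph formulas from scratch (and check the boundary cases), whereas the paper simply quotes them as known.
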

 \begin{proof}
 The integer $ p $ is only proper divisor of $ p^2 $. By Corollary \ref{induced}, $ \Gamma(\mathbb{Z}_{p^2}) $ is the complete graph $ K_{\phi(p)} $. Since $D(K_n,x)=(1+x)^n-1$, $D_t(K_n,x)=(1+x)^n-nx-1$ and $\phi(p)=p-1$, so  we have the results. \qed
 \end{proof}
 \begin{theorem}
 For prime $ p $, 
 \begin{enumerate} 
 	\item[(i)] $ D(\Gamma(\mathbb{Z}_{2p}),x)=x^{p-1}+x(1+x)^{p-1}.$
  \item[(ii)] 
    $D_t(\Gamma(\mathbb{Z}_{2p}),x)=x[(1+x)^{p-1}-1].$
\end{enumerate} 
 \end{theorem}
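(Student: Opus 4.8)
The plan is to first identify the graph $\Gamma(\mathbb{Z}_{2p})$ precisely and then read off the two polynomials from its structure. The proper divisors of $2p$ are $2$ and $p$, so the vertex set partitions as $V_{2}\cup V_{p}$ with $|V_{2}|=\phi(2p/2)=\phi(p)=p-1$ and $|V_{p}|=\phi(2p/p)=\phi(2)=1$. By Corollary~\ref{induced}(i) the induced subgraph on $V_{2}$ is $\overline{K}_{p-1}$, since $2p\nmid 4$ (this is where the hypothesis $p>2$ enters), while the subgraph on $V_{p}$ is a single vertex. Since $2p\mid 2\cdot p$, the unique vertex of $V_{p}$ (namely $p$ itself) is adjacent to every vertex of $V_{2}$. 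Hence $\Gamma(\mathbb{Z}_{2p})$ is the star $K_{1,p-1}$ with centre $p$ and with the $p-1$ elements of $V_{2}$ as leaves.

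For part (i) I would compute $D(K_{1,p-1},x)$ by conditioning on whether the centre $c$ belongs to the dominating set $S$. If $c\in S$ then $N[c]=V$, so $S$ dominates regardless of which leaves it contains; the dominating sets of size $i$ with $c\in S$ are in bijection with the $(i-1)$-subsets of the leaf set, contributing $\sum_{i}\binom{p-1}{i-1}x^{i}=x(1+x)^{p-1}$. If $c\notin S$ then $S$ consists of leaves only, and because a leaf's sole neighbour is $c$, any leaf missing from $S$ is undominated; thus $S$ must be the entire leaf set, which has size $p-1$ and does dominate $c$. This contributes the single monomial $x^{p-1}$, and adding the two contributions gives the claimed formula.

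For part (ii) I would argue the same way for total domination: a set $D$ is a total dominating set of $K_{1,p-1}$ iff every vertex has a neighbour in $D$. Each leaf forces $c\in D$ (its only neighbour), and $c$ forces at least one leaf into $D$; conversely any set containing $c$ and at least one leaf is totally dominating. Counting by size, such sets of size $i$ number $\binom{p-1}{i-1}$ for $i\geq 2$ and $0$ otherwise, so $D_{t}(\Gamma(\mathbb{Z}_{2p}),x)=x\sum_{j=1}^{p-1}\binom{p-1}{j}x^{j}=x\big[(1+x)^{p-1}-1\big]$.

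I do not expect a genuine obstacle: the whole argument is routine once $\Gamma(\mathbb{Z}_{2p})$ is recognised as a star. The two points that need a moment of care are invoking $p>2$ so that $\Gamma(V_{2})$ is empty rather than complete, and the degenerate dominating set in part (i) consisting of all leaves, which is the reason the answer carries the extra term $x^{p-1}$ instead of being simply $x(1+x)^{p-1}$.
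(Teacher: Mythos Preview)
Your proof is correct and follows essentially the same route as the paper: identify $\Gamma(\mathbb{Z}_{2p})$ as the star $K_{1,p-1}$ via the partition $V_{2}\cup V_{p}$ and Corollary~\ref{induced}, and then read off the two polynomials. The only difference is cosmetic: the paper quotes the known formulas $D(K_{1,n},x)=x(1+x)^{n}+x^{n}$ and $D_{t}(K_{1,n},x)=x\big((1+x)^{n}-1\big)$ directly, whereas you rederive them by conditioning on membership of the centre, which is exactly how those formulas are proved in the first place.
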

 \begin{proof}
 The integers $ 2 $ and $ p $ are the proper divisors of $ 2p $, so the vertex set of this graph can be partitioned into two distinct subsets as $ V_2=\lbrace 2x : x=1,...,p-1\rbrace$  and $ V_p=\lbrace p\rbrace$. 
By Corollary \ref{induced}, $ \Gamma(\mathbb{Z}_{2p}) $ is the star graph $ K_{1,\phi(p)} $. Since $D(K_{1,n},x)=x(1+x)^{n}+x^n$, $D_t(K_{1,n},x)=x((1+x)^n-1)$ and $\phi(p)=p-1$, so
 we have the results. \qed
 \end{proof}
 
We need the following theorem which is the domination polynomial of join of two graphs.  
 The join $G_1 \vee G_2$ of two graph $G_1$ and $G_2$ with disjoint vertex sets $V_1$ and $V_2$ and
 edge sets $E_1$ and $E_2$ is the graph union $G_1\cup G_2$ together with all the edges joining $V_1$ and  $V_2$.

\begin{theorem}\label{dpjoin}{\rm \cite{2}}
	Let $G_1$ and $G_2$ be  graphs of orders $n_1$ and $n_2$,
	respectively. Then
	\[ 
	D(G_1\vee G_2,x) = \Big((1+x)^{n_1}-1\Big)\Big((1+x)^{n_2}-1\Big)+D(G_1,x)+D(G_2,x).
	\] 
\end{theorem}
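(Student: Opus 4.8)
The plan is to count dominating sets of $G_1 \vee G_2$ by classifying them according to which of the two parts they meet. Write $V_1$ and $V_2$ for the vertex sets of $G_1$ and $G_2$, with $|V_1| = n_1$ and $|V_2| = n_2$. The key structural observation is that in the join, every vertex of $V_1$ is adjacent to every vertex of $V_2$ and vice versa. Hence a set $S \subseteq V_1 \cup V_2$ that contains at least one vertex from \emph{each} part automatically dominates everything: any vertex not in $S$ lies in one part and is adjacent to the chosen vertex of the other part. So the first step is to count such "mixed" sets: writing $S = S_1 \cup S_2$ with $\emptyset \neq S_i \subseteq V_i$, the generating function for the number of these by size is exactly $\big((1+x)^{n_1}-1\big)\big((1+x)^{n_2}-1\big)$, since $(1+x)^{n_i}-1$ enumerates the nonempty subsets of $V_i$ by cardinality.

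Next I would handle the sets that are contained entirely in one part, say $S \subseteq V_1$ (the case $S \subseteq V_2$ is symmetric, and the empty set is covered once in each but is never dominating as long as $n_1, n_2 \geq 1$, so there is no double-counting to worry about among nonempty sets). Such an $S$ dominates all of $V_2$ for free (again by the join edges, provided $S \neq \emptyset$), so $S$ is a dominating set of $G_1 \vee G_2$ if and only if $S$ dominates $V_1$ within $G_1 \vee G_2$. But the neighbors of a vertex $v \in V_1$ inside $V_1$ are precisely its $G_1$-neighbors (the join adds no edges within $V_1$), so $S \subseteq V_1$ dominates $V_1$ in the join exactly when $S$ is a dominating set of $G_1$. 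Therefore these sets are enumerated by $D(G_1,x)$, and symmetrically the sets inside $V_2$ contribute $D(G_2,x)$.

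Finally I would assemble the three contributions. Every dominating set of $G_1 \vee G_2$ either meets both parts (counted by the product term) or is a nonempty subset of a single part (counted by $D(G_1,x)$ or $D(G_2,x)$, each of which includes its constant term $d(G_i,0)=0$ so no spurious empty set is added). These cases are mutually exclusive, so summing gives
\[
D(G_1 \vee G_2, x) = \Big((1+x)^{n_1}-1\Big)\Big((1+x)^{n_2}-1\Big) + D(G_1,x) + D(G_2,x),
\]
as claimed. The only point requiring care — and the place a sloppy argument could go wrong — is the bookkeeping around the empty set and making sure the "mixed" and "one-sided" families are genuinely disjoint and jointly exhaustive; once the casework is set up cleanly, the rest is immediate from the definitions. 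Since $n_1, n_2 \geq 1$, I should also note explicitly why a nonempty subset of one part always dominates the other part, which is exactly the completeness of the bipartite join between $V_1$ and $V_2$.
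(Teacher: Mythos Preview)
Your proof is correct and is in fact the standard argument for this identity. Note, however, that the paper does not supply its own proof of this theorem: it is quoted from \cite{2} (Alikhani and Peng) as a known result and used as a tool, so there is no in-paper proof to compare against. Your classification of dominating sets of $G_1\vee G_2$ into those meeting both $V_1$ and $V_2$ (any such set dominates, giving the product $\big((1+x)^{n_1}-1\big)\big((1+x)^{n_2}-1\big)$) and those lying entirely in one side (which dominate the join iff they dominate the corresponding $G_i$, giving $D(G_i,x)$) is exactly the decomposition used in the original reference, and your remarks about the empty set and the requirement $n_1,n_2\ge 1$ are the right bookkeeping checks.
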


 \begin{theorem}
 If $ p>q>2 $ are prime numbers, then,
\begin{enumerate} 
\item[(i)] 
 $D(\Gamma(\mathbb{Z}_{pq}),x)=[(1+x)^{\phi(p)}-1][(1+x)^{\phi(q)}-1]+x^{\phi(p)}+x^{\phi(q)}$.
 \item[(ii)]
  $ D_t(\Gamma(\mathbb{Z}_{pq}),x)=[(1+x)^{\phi(p)}-1][(1+x)^{\phi(q)}-1]. $
 \end{enumerate}
 \end{theorem}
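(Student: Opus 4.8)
The plan is to first pin down the structure of $\Gamma(\mathbb{Z}_{pq})$ and then read off both polynomials. The proper divisors of $pq$ are exactly $p$ and $q$, so the vertex set partitions as $V_p\cup V_q$; by the cardinality lemma $|V_p|=\phi(pq/p)=\phi(q)$ and $|V_q|=\phi(pq/q)=\phi(p)$. Since $pq$ divides neither $p^2$ nor $q^2$, Corollary~\ref{induced}(i) shows that the induced subgraphs on $V_p$ and on $V_q$ are both edgeless, i.e.\ $\overline{K}_{\phi(q)}$ and $\overline{K}_{\phi(p)}$. On the other hand $pq\mid p\cdot q$, so by the adjacency lemma stated above every vertex of $V_p$ is joined to every vertex of $V_q$. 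Therefore
\[
\Gamma(\mathbb{Z}_{pq})=\overline{K}_{\phi(q)}\vee\overline{K}_{\phi(p)}=K_{\phi(p),\phi(q)} .
\]

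For (i) I would simply feed this join into Theorem~\ref{dpjoin}. The one extra fact needed is $D(\overline{K}_m,x)=x^m$: in an edgeless graph no vertex dominates another, so the only dominating set is the whole vertex set. Taking $n_1=\phi(q)$, $n_2=\phi(p)$, $D(G_1,x)=x^{\phi(q)}$, $D(G_2,x)=x^{\phi(p)}$ in the join formula gives precisely $[(1+x)^{\phi(p)}-1][(1+x)^{\phi(q)}-1]+x^{\phi(p)}+x^{\phi(q)}$.

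For (ii) the paper supplies no join formula for the total domination polynomial, so I would argue directly on $G=K_{\phi(p),\phi(q)}$ with parts $A$ and $B$. Here the neighbourhood of any vertex of $A$ is all of $B$ and conversely, so a set $D\subseteq V(G)$ totally dominates $G$ exactly when $D\cap A\neq\emptyset$ and $D\cap B\neq\emptyset$: the first condition makes every vertex of $B$ have a neighbour in $D$, the second does the same for every vertex of $A$. Consequently $d_t(G,i)$ counts pairs consisting of a nonempty subset of $A$ and a nonempty subset of $B$ whose union has size $i$, whose generating function is $\bigl((1+x)^{|A|}-1\bigr)\bigl((1+x)^{|B|}-1\bigr)$; substituting $|A|=\phi(p)$, $|B|=\phi(q)$ yields the stated expression.

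The only real point of care is (ii), where one has to notice that the join machinery of Theorem~\ref{dpjoin} does not transfer to total domination and replace it by the short count above; everything else is a direct application of the preliminaries. As a sanity check, setting $q=2$ collapses $\overline{K}_{\phi(q)}$ to $K_1$, turns $G$ into the star $K_{1,p-1}$, and recovers the formulas of the previous theorem.
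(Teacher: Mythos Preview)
Your proof is correct. Part (i) is exactly the paper's argument: identify $\Gamma(\mathbb{Z}_{pq})$ as $K_{\phi(p),\phi(q)}=\overline{K}_{\phi(p)}\vee\overline{K}_{\phi(q)}$ and plug $D(\overline{K}_m,x)=x^m$ into the join formula.

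For (ii) the routes differ slightly. The paper derives it from (i) by subtraction: since $\phi(p)>\phi(q)>1$, a dominating set of $K_{\phi(p),\phi(q)}$ fails to be total exactly when it coincides with one of the two parts $V_p$ or $V_q$, so $d_t(G,i)=d(G,i)-1$ for $i\in\{\phi(p),\phi(q)\}$ and $d_t=d$ otherwise; removing $x^{\phi(p)}+x^{\phi(q)}$ from the formula in (i) gives the result. You instead characterise total dominating sets directly as the subsets meeting both parts and read off the product $\bigl((1+x)^{\phi(p)}-1\bigr)\bigl((1+x)^{\phi(q)}-1\bigr)$ without invoking (i). Your count is self-contained and slightly cleaner; the paper's subtraction makes the relationship between $D$ and $D_t$ explicit. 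Both are short and valid.
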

 \begin{proof}
 	\begin{enumerate} 
 		\item[(i)] 
  The integers $ p $ and $ q $ are the proper divisors of $ pq $, so the vertex set of this graph can be partitioned into two distinct subsets as
 \[
 V_p=\lbrace px : x=1,...,q-1\rbrace,
 \]
 \[
 V_q=\lbrace qx : x=1,...,p-1\rbrace.
 \]
 Consequently, by Corollary \ref{induced}, $ \Gamma(\mathbb{Z}_{pq}) $ is the complete bipartite graph $ K_{\phi(p),\phi(q)} $. Since $ K_{\phi(p),\phi(q)}=\overline{K}_{\phi(p)}\vee\overline{K}_{\phi(q)} $, so by Theorem \ref{dpjoin}, we have the result. 
 
 \item[(ii)] 
  Since $ \phi(p)>\phi(q)>1 $, so $ \gamma(\Gamma(\mathbb{Z}_{pq}))=\gamma_t(\Gamma(\mathbb{Z}_{pq}))=2 $. Every dominating set of size $ i $ is a total dominating set, if it contains some vertices of both subsets $  V_p $ and $  V_q $. The subsets $  V_p $ and $  V_q $ with cardinality $ \phi(q) $ and $ \phi(p) $ respectively, are dominating sets but not total dominating sets. Therefore
 for $ i\geqslant 2 $,
 \[
 d_t(\Gamma(\mathbb{Z}_{pq}),i)=
 \left\{
 \begin{array}{lr}
 d(\Gamma(\mathbb{Z}_{pq}),i)-1
 \quad\mbox{if ~~$i=q-1 ~~or~~ p-1$,}\\[15pt]
 d(\Gamma(\mathbb{Z}_{pq}),i)
 \quad\mbox{~~~~~otherwise.}\\ 
 \end{array}
 \right.
 \]
 and so the result follows. 
 \qed
 \end{enumerate} 
 \end{proof}

 \begin{figure}[ht]
 	\centering
 	\includegraphics[scale=0.8]{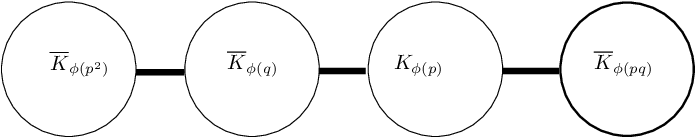}
 	\caption{The graph $ \Gamma(\mathbb{Z}_{p^2q}) $}\label{Z_p2q}
 \end{figure}

 \begin{theorem}
  For prime numbers $ p>q>2 $ and $ i\geqslant 2 $,
\begin{enumerate} 
\item[(i)]  If $ A=\displaystyle\sum_{a+b+c=i,~a,b\geqslant 1}{\phi(p)\choose a}{\phi(q)\choose b}{\phi(p^2)+\phi(pq)\choose c}
+ \displaystyle\sum_{a+b=i-\phi(p^2),~a\geqslant 1}{\phi(p)\choose a}{\phi(pq)\choose b}
+ \displaystyle\sum_{a+b=i-\phi(pq),~a\geqslant 1}{\phi(q)\choose a}{\phi(p^2)\choose b}
$, then 
  \[
   d(\Gamma(\mathbb{Z}_{p^2q}),i)=
   \left\{
\begin{array}{lr}
 A+1
\quad\mbox{if ~~$i=\phi(p^2)+\phi(pq)$,}\\[15pt]
A
\quad\mbox{~~~~~otherwise.}\\ 
 \end{array}
\right.
\]

 \item[(ii)]
 $D_t(\Gamma(\mathbb{Z}_{p^2q}),x)=\displaystyle\sum_{i=2}^{\mid V\mid}\displaystyle\sum_{a+b+c=i,~a,b\geqslant 1}{\phi(p)\choose a}{\phi(q)\choose b}{\phi(p^2)+\phi(pq)\choose c}x^i $.
 \end{enumerate}
 \end{theorem}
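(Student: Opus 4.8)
The plan is to determine the structure of $\Gamma:=\Gamma(\mathbb{Z}_{p^2q})$ explicitly and then read off both polynomials by elementary counting. The proper divisors of $p^2q$ are $p,q,p^2,pq$, so the vertex set partitions as $V_p\cup V_q\cup V_{p^2}\cup V_{pq}$; by the cardinality lemma \cite{Young} these parts have sizes $\phi(pq),\phi(p^2),\phi(q),\phi(p)$ respectively, and by Corollary~\ref{induced} together with the divisibility criterion ``$u\in V_{d_i}$, $v\in V_{d_j}$ are adjacent iff $p^2q\mid d_id_j$'' one checks that $V_{pq}$ induces the clique $K_{\phi(p)}$, that $V_p,V_q,V_{p^2}$ are independent sets, and that the only edges between distinct parts form complete bipartite graphs between $V_{pq}$ and $V_p$, between $V_{pq}$ and $V_{p^2}$, and between $V_{p^2}$ and $V_q$; this is the graph of Figure~\ref{Z_p2q}, which has the shape $V_p - V_{pq} - V_{p^2} - V_q$ with $V_{pq}$ a clique. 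In particular every vertex of $V_p$ has neighbourhood exactly $V_{pq}$, and every vertex of $V_q$ has neighbourhood exactly $V_{p^2}$.

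For part (ii), I would show that $D\subseteq V(\Gamma)$ is a total dominating set if and only if $D\cap V_{pq}\ne\emptyset$ and $D\cap V_{p^2}\ne\emptyset$. Necessity follows from the previous sentence (vertices of $V_p$, resp.\ $V_q$, are dominated only from within $V_{pq}$, resp.\ $V_{p^2}$); for sufficiency, $D\cap V_{pq}$ dominates $V_p\cup V_{pq}\cup V_{p^2}$ and $D\cap V_{p^2}$ dominates $V_q\cup V_{pq}$, so every vertex has a neighbour in $D$. Hence a total dominating set of size $i$ is exactly a choice of $a\ge 1$ vertices of $V_{pq}$, $b\ge 1$ vertices of $V_{p^2}$, and an arbitrary $c$-subset of the $\phi(p^2)+\phi(pq)$ vertices of $V_p\cup V_q$, with $a+b+c=i$; summing over $i\ge 2=\gamma_t(\Gamma)$ gives the stated $D_t(\Gamma,x)$.

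For part (i), I would enumerate the dominating sets $D$ by cases on whether $D\cap V_{pq}=\emptyset$. If $D\cap V_{pq}\ne\emptyset$, then, $V_{pq}$ being a clique completely joined to $V_p$ and $V_{p^2}$, all of $V_p,V_{pq},V_{p^2}$ are already dominated, and $D$ is dominating if and only if $V_q$ is dominated, i.e.\ $D\cap V_{p^2}\ne\emptyset$ or $V_q\subseteq D$. The subfamily with $D\cap V_{p^2}\ne\emptyset$ is precisely the family of total dominating sets of part (ii), contributing the first sum in $A$; the subfamily with $D\cap V_{p^2}=\emptyset$ forces $V_q\subseteq D$ and so consists of all of $V_q$ together with $a\ge 1$ vertices of $V_{pq}$ and an arbitrary subset of $V_p$, contributing the second sum in $A$ (shifted by $\phi(p^2)=|V_q|$); the two subfamilies are disjoint since they differ on $D\cap V_{p^2}$.

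The remaining case $D\cap V_{pq}=\emptyset$ is the delicate one. Then every vertex of $V_p$ must lie in $D$, so $V_p\subseteq D$, which already dominates $V_{pq}$; it remains to dominate $V_q$ and $V_{p^2}$ using $D\cap(V_q\cup V_{p^2})$ across the complete bipartite join between them, and here one must keep careful track that $V_{p^2}$ is an \emph{independent} set, so that a proper nonempty $D\cap V_{p^2}$ does not dominate the rest of $V_{p^2}$ unless $D$ also meets $V_q$. Counting the admissible configurations then yields the third sum in $A$ (shifted by $\phi(pq)=|V_p|$), together with the single extra dominating set $D=V_p\cup V_q$, of size $\phi(p^2)+\phi(pq)$, which accounts for the ``$+1$'' in the case $i=\phi(p^2)+\phi(pq)$. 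After checking that the three families are pairwise disjoint and exhaust $\mathcal{D}(\Gamma,i)$, the formula for $d(\Gamma,i)$ follows. I expect this last case to be the main obstacle: the first two cases are routine once the structure in the first paragraph is in hand, whereas dominating the two independent parts $V_q$ and $V_{p^2}$ across their join requires a careful inclusion--exclusion and the isolation of the exceptional set.
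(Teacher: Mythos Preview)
Your structural analysis and your case split coincide with the paper's: both arguments partition the dominating sets according to whether $D$ meets $V_{pq}$ and whether $D$ meets $V_{p^2}$, obtain the first sum from the case $D\cap V_{pq}\ne\emptyset$, $D\cap V_{p^2}\ne\emptyset$ (these are exactly the total dominating sets, giving part~(ii)), the second sum from $D\cap V_{pq}\ne\emptyset$, $D\cap V_{p^2}=\emptyset$ (forcing $V_q\subseteq D$), and attribute the remaining contributions to $D\cap V_{pq}=\emptyset$. The paper further splits this last family according to whether $D$ meets $V_{p^2}$, assigning the third sum to $D\cap V_{p^2}\ne\emptyset$ and the single set $V_p\cup V_q$ to $D\cap V_{p^2}=\emptyset$; you merge these two subcases but claim the same totals. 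So the route is the same.

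There is, however, a genuine gap in the third case, and it is precisely the difficulty you flag but do not resolve. With $D\cap V_{pq}=\emptyset$ you correctly deduce $V_p\subseteq D$, and then you assert that the admissible choices of $D\cap(V_{p^2}\cup V_q)$ are counted by $\sum_{a+b=i-\phi(pq),\,a\ge 1}\binom{\phi(q)}{a}\binom{\phi(p^2)}{b}$. But this sum allows $b=0$ with $1\le a<\phi(q)$, i.e.\ sets of the form $D=V_p\cup S$ with $\emptyset\ne S\subsetneq V_{p^2}$ and $D\cap V_q=\emptyset$. Such a $D$ does \emph{not} dominate the vertices of $V_{p^2}\setminus S$: their only neighbours lie in $V_q\cup V_{pq}$, and $D$ meets neither. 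Concretely, for $p=5$, $q=3$ one has $\phi(q)=2$, $\phi(pq)=8$, and at $i=9$ the third sum contributes $\binom{2}{1}\binom{20}{0}=2$, whereas no dominating set of this type exists. A correct count of this case is the number of dominating sets of size $i-\phi(pq)$ in the complete bipartite graph on parts $V_{p^2}$ and $V_q$, namely $\sum_{a,b\ge 1}\binom{\phi(q)}{a}\binom{\phi(p^2)}{b}$ together with the two one-part exceptions $V_{p^2}$ and $V_q$; this differs from the stated third sum by $\binom{\phi(q)}{\,i-\phi(pq)\,}$ whenever $\phi(pq)<i<\phi(pq)+\phi(q)$. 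In short, your ``careful inclusion--exclusion'' would not reproduce the third summand of $A$ as written; the paper's own Case~3 has the same oversight, so following its argument verbatim will not close the gap either.
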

 
 \begin{proof}
 	\begin{enumerate} 
 		\item[(i)]
 		By Corollary \ref{induced}, the vertex set of $ \Gamma(\mathbb{Z}_{p^2q}) $ (the graph $ G=\Gamma(\mathbb{Z}_{p^2q}) $ have shown in Figure \ref{Z_p2q}) can be partitioned into four disjoint subsets such as
  \[
 V_p=\lbrace px : x = 1, 2,..., pq-1, p\nmid x,q \nmid x \rbrace,~~~~\Gamma(V_p)=\overline{K}_{\phi(pq)},
 \]
  \[
 V_q=\lbrace qx : x = 1, 2,..., p^2-1, p\nmid x \rbrace,~~~~~~~~~~~~\Gamma(V_q)=\overline{K}_{\phi(p^2)},
 \]
  \[
 V_{p^2}=\lbrace p^2x : x = 1, 2,..., q-1 \rbrace,~~~~~~~~~~~~~~~~~~\Gamma(V_{p^2})=\overline{K}_{\phi(q)},
 \]
 \[
 V_{pq}=\lbrace pqx : x = 1, 2,..., p-1 \rbrace,~~~~~~~~~~~~~~~~~~\Gamma(V_{pq})=K_{\phi(p)},
 \]
 Since $ \phi(p^2)>\phi(pq)>\phi(p)>\phi(q)>1 $, so $ \gamma(G)=\gamma_t(G)=2 $ and $ d(G,2) =d_t(G,2)=\phi(p)\phi(q)$.

 For $ i>2 $, to choose a dominating set of size $ i $ there are four following cases:
 
Case 1: If $ x\in V_{pq} , y\in V_{p^2}$, then 
\[d(G,i)=\sum_{a+b+c=i,~a,b\geqslant 1}{\phi(p)\choose a}{\phi(q)\choose b}{\phi(p^2)+\phi(pq)\choose c},
 \]
 
 Case 2: 
If $x\in V_{pq} , y\notin V_{p^2}$, then 
\[
d(G,i)=\sum_{a+b=i-\phi(p^2),~a\geqslant 1}{\phi(p)\choose a}{\phi(pq)\choose b}.
\]

  Case 3: 
If $x\notin V_{pq} , y\in V_{p^2}$, then 
\[
d(G,i)=\sum_{a+b=i-\phi(pq),~a\geqslant 1}{\phi(q)\choose a}{\phi(p^2)\choose b}.
\]
 
 Case 4:  
If  $ x\notin V_{pq} , y\notin V_{p^2}$, then 
\[
d(G,i)={\phi(pq)\choose \phi(pq)}{\phi(p^2)\choose \phi(p^2)}=1.
\]  

 Therefore by the addition principle we have the result.

\item[(ii)]
Every total dominating set contains some vertices of both $ \overline{K}_{\phi(q)} $ and $ K_{\phi(p)} $. So for $ i\geqslant 2 $, $ d_t(G,i)=\sum_{a+b+c=i,~a,b\geqslant 1}{\phi(p)\choose a}{\phi(q)\choose b}{\phi(p^2)+\phi(pq)\choose c} $.
\qed
\end{enumerate} 
 \end{proof}

 \begin{figure}[ht]
 	\centering
 	\includegraphics[scale=0.7]{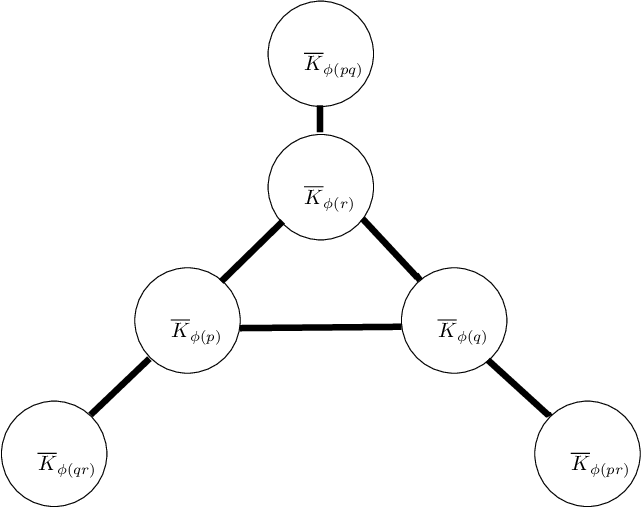}
 	\caption{The graph $\Gamma(\mathbb{Z}_{pqr})$}\label{Z_pqr}
 \end{figure}

 The following theorem gives the number of dominating sets and total dominating sets of $ \Gamma(\mathbb{Z}_{pqr}) $ for prime numbers $ p>q>r>2 $. 
 
 \begin{theorem}
 Let $ \Gamma(\mathbb{Z}_{pqr}) $ be a zero-divisor graph and $ p>q>r>2 $ be distinct prime numbers. Then for $ i\geqslant 3 $,
 \begin{enumerate} 
\item[(i)] 
   $   d(\Gamma(\mathbb{Z}_{pqr}),i)=
   \left\{
\begin{array}{lr}
 A+B+C+1
\quad\mbox{if ~~$i=\phi(pq)+\phi(pr)+\phi(qr)$,}\\[15pt]
A+B+C
\quad\mbox{~~~~otherwise.}\\ 
 \end{array}
\right.
 $ 
 
 where;
\[ A=\sum_{a+b+c+d=i,~a,b,c\geqslant 1}{\phi(p)\choose a}{\phi(q)\choose b}{\phi(r)\choose c}{\phi(pr)+\phi(pq)+\phi(qr)\choose d},\]
\[ B=\sum_{a+b+c=i-\phi(pr),~a,b\geqslant 1}{\phi(p)\choose a}{\phi(r)\choose b}{\phi(pq)+\phi(qr)\choose c},\]
\[ C=\sum_{a+b=i-(\phi(pr)+\phi(qr)),~a\geqslant 1}{\phi(r)\choose a}{\phi(pq)\choose b},
\] 
likewise $ B,C $ is computed for two other cases.
 \item[(ii)]
 $D_t(\Gamma(\mathbb{Z}_{pqr}),x)=\displaystyle\sum_{i=3}^{\mid V\mid}\displaystyle\sum_{a+b+c+d=i,~a,b,c\geqslant 1}{\phi(p)\choose a}{\phi(q)\choose b}{\phi(r)\choose c}{\phi(pr)+\phi(pq)+\phi(qr)\choose d}x^i $.
 \end{enumerate}
 \end{theorem}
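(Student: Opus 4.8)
The plan is to read off the structure of $G:=\Gamma(\mathbb{Z}_{pqr})$ from Corollary~\ref{induced}, then carry out a case analysis of the dominating sets organized by which of three distinguished ``core'' classes the set meets, and finally observe that total domination collapses to a single clean condition.

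First I would pin down $G$. The proper divisors of $pqr$ are $p,q,r,pq,pr,qr$, so the vertex set splits into $V_p,V_q,V_r,V_{pq},V_{pr},V_{qr}$ with cardinalities $\phi(qr),\phi(pr),\phi(pq),\phi(r),\phi(q),\phi(p)$ respectively. No proper divisor $d$ of $pqr$ satisfies $pqr\mid d^2$ (that would force $pqr\mid d$), so every class induces an empty graph by Corollary~\ref{induced}(i); checking the condition $pqr\mid d_id_j$ together with Corollary~\ref{induced}(ii) then shows that the three ``core'' classes $V_{qr},V_{pr},V_{pq}$ are pairwise completely joined, that the ``outer'' classes $V_p,V_q,V_r$ are completely joined to $V_{qr},V_{pr},V_{pq}$ respectively, and that there are no further edges. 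Thus $G$ is a complete tripartite graph on $V_{qr}\cup V_{pr}\cup V_{pq}$ together with three independent classes $V_p,V_q,V_r$ each hanging off one core class (see Figure~\ref{Z_pqr}). A set with one vertex in each core class dominates $G$ while no $2$-subset does, so $\gamma(G)=\gamma_t(G)=3$, which is why both formulas begin at $i=3$.

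For part (i), for $i\geq 3$ I would partition the $i$-subsets $S$ by how many core classes $S$ meets, using two local observations: a vertex of an outer class $V_t$ is dominated iff $S$ meets the core class $V_t$ is joined to or $V_t\subseteq S$; a core class is dominated iff $S$ meets its neighbourhood, which contains the other two core classes. If $S$ meets all three core classes it is automatically dominating and the outer classes may be filled freely, contributing the quantity $A$. If $S$ meets exactly two core classes, missing one --- say $V_{pr}$ --- the outer class attached to it, $V_q$, is forced into $S$, after which $S$ is dominating; this contributes $B$ together with its two symmetric analogues. If $S$ meets exactly one core class --- say $V_{pq}$ --- then $V_p$ and $V_q$ are forced in, and the only residual requirement is that the class $V_{pq}$ be dominated, i.e. $V_{pq}\subseteq S$ or $S\cap V_r\neq\emptyset$; this contributes $C$ and its two analogues. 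Finally, if $S$ meets no core class then $V_p,V_q,V_r\subseteq S$ and $S$ contains no core vertex, so $S=V_p\cup V_q\cup V_r$ is the unique such set (and it is dominating), contributing $1$ exactly when $i=\phi(pq)+\phi(pr)+\phi(qr)$. Adding the four contributions yields the stated value of $d(\Gamma(\mathbb{Z}_{pqr}),i)$.

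For part (ii), the key point is that every vertex of an outer class $V_t$ --- whether or not it lies in $S$ --- must have a neighbour in $S$, and its only neighbours lie in the single core class attached to $V_t$; since the outer classes are all nonempty, a total dominating set must meet all three core classes, and conversely any set meeting all three is clearly totally dominating. Hence the total dominating sets of size $i$ are exactly the sets counted in the ``meets all three core classes'' case of part (i), so $D_t(\Gamma(\mathbb{Z}_{pqr}),x)$ is the generating function of the quantity $A$, as claimed. I expect the main difficulty to be the domination case analysis --- in particular, keeping the count of boundary choices in the ``meets exactly one core class'' case exact (the disjunction ``$V_{pq}\subseteq S$ or $S\cap V_r\neq\emptyset$'') so that it meshes consistently with the ``meets no core class'' contribution; the remaining steps are routine applications of the addition and multiplication principles.
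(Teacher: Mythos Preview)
Your approach is exactly the paper's: partition $V(G)$ into the six classes, read the adjacencies from Corollary~\ref{induced}, and sort dominating sets by which of the three ``core'' classes $V_{qr},V_{pr},V_{pq}$ they meet. The paper phrases this as eight cases (the $2^3$ patterns of ``$x\in V_{pq}$, $y\in V_{pr}$, $z\in V_{qr}$''), which is precisely your ``meets all three / exactly two / exactly one / none'' split; part~(ii) is handled identically in both, by observing that each outer class has all its neighbours in a single core class, so a total dominating set must meet all three cores.

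Your flagged subtlety is not just a difficulty to watch --- it is a genuine discrepancy with the paper. In the ``meets exactly one core class'' case (say $S\cap V_{pq}\neq\emptyset$, $S\cap V_{pr}=S\cap V_{qr}=\emptyset$), after forcing $V_p\cup V_q\subseteq S$ the paper's Case~7 formula
\[
\sum_{a+b=i-(\phi(pr)+\phi(qr)),\,a\ge 1}\binom{\phi(r)}{a}\binom{\phi(pq)}{b}
\]
allows $b=0$ together with $1\le a<\phi(r)$, but such an $S$ is \emph{not} dominating: a vertex of $V_{pq}\setminus S$ has all its neighbours in $V_r\cup V_{pr}\cup V_{qr}$, none of which $S$ meets. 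Your disjunction ``$V_{pq}\subseteq S$ or $S\cap V_r\neq\emptyset$'' is exactly the missing constraint, and enforcing it removes the terms $\binom{\phi(r)}{a}$ with $1\le a<\phi(r)$ and $b=0$. So your case analysis, carried out as you describe, proves a corrected version of part~(i) rather than the formula as stated; the paper's own proof matches your outline step for step but simply omits this constraint in Cases~5--7.
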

 \begin{proof}
 	\begin{enumerate} 
 		\item[(i)] 
 By Corollary \ref{induced}, the vertex set of $ \Gamma(\mathbb{Z}_{pqr}) $ can be partitioned into six disjoint subsets such as 
 \[
 V_p=\lbrace px : x = 1, 2,..., qr-1, q\nmid x,r \nmid x \rbrace,~~~~\Gamma(V_p)=\overline{K}_{\phi(qr)},
 \]
 \[
 V_q=\lbrace px : x = 1, 2,..., pr-1, p\nmid x,r \nmid x \rbrace,~~~~\Gamma(V_q)=\overline{K}_{\phi(pr)},
 \]
 \[
 V_r=\lbrace px : x = 1, 2,..., pq-1, p\nmid x,q \nmid x \rbrace,~~~~\Gamma(V_r)=\overline{K}_{\phi(pq)},
 \]
  \[
 V_{pq}=\lbrace pqx : x = 1, 2,..., r-1\rbrace,~~~~~~~~~~~~~~~~~\Gamma(V_{pq})=\overline{K}_{\phi(r)},
 \] 
 \[
 V_{pr}=\lbrace prx : x = 1, 2,..., q-1\rbrace,~~~~~~~~~~~~~~~~~\Gamma(V_{pr})=\overline{K}_{\phi(q)},
 \]
  \[
 V_{qr}=\lbrace qrx : x = 1, 2,..., p-1\rbrace,~~~~~~~~~~~~~~~~~\Gamma(V_{qr})=\overline{K}_{\phi(p)},
 \]
 
 The graph $ G=\Gamma(\mathbb{Z}_{pqr}) $ have shown in Figure \ref{Z_pqr}.  Since $ \phi(pq)>\phi(pr)>\phi(qr)>\phi(p)>\phi(q)>\phi(r)>1 $, so $ \gamma(G)=\gamma_t(G)=3 $ and $ d(G,3) =d_t(G,3)=\phi(p)\phi(q)\phi(r)$.

 For $ i>3 $, to choose a dominating set of size $ i $ there are eight following cases:
  
  Case 1: If $x\in V_{pq}, y\in V_{pr}, z\in V_{qr}$, then 
    \[
  d(G,i)=\sum_{a+b+c+d=i,~a,b,c\geqslant 1}{\phi(p)\choose a}{\phi(q)\choose b}{\phi(r)\choose c}{\phi(pq)+\phi(pr)+\phi(qr)\choose d}.
  \] 
  
Case 2: If $x\notin V_{pq}, y\in V_{pr}, z\in V_{qr}$,  then

\[
d(G,i)= \sum_{a+b+c=i-\phi(pq),~a,b\geqslant 1}{\phi(p)\choose a}{\phi(q)\choose b}{\phi(pr)+\phi(qr)\choose c}.
\]

 Case 3: If $x\in V_{pq}, y\notin V_{pr}, z\in V_{qr}$, then
 \[
 d(G,i)=\sum_{a+b+c=i-\phi(pr),~a,b\geqslant 1}{\phi(p)\choose a}{\phi(r)\choose b}{\phi(pq)+\phi(qr)\choose c}.
 \] 
 
 Case 4: If $x\in V_{pq}, y\in V_{pr}, z\notin V_{qr}$, then 
 \[
 d(G,i)=\sum_{a+b+c=i-\phi(qr),~a,b\geqslant 1}{\phi(q)\choose a}{\phi(r)\choose b}{\phi(pq)+\phi(pr)\choose c}.
 \]
  
  Case 5: If $x\notin V_{pq} , y\notin V_{pr}, z\in V_{qr}$, then 
  \[
  d(G,i)=\sum_{a+b=i-(\phi(pq)+\phi(pr)),~a\geqslant 1}{\phi(p)\choose a}{\phi(qr)\choose b}. 
  \]
  
  Case 6: 
 If $x\notin V_{pq}, y\in V_{pr}, z\notin V_{qr},$ then 
 \[
 d(G,i)=\sum_{a+b=i-(\phi(pq)+\phi(qr)),~a\geqslant 1}{\phi(q)\choose a}{\phi(pr)\choose b}. 
 \]
 
 Case 7: 
  If $x\in V_{pq}, y\notin V_{pr}, z\notin V_{qr}$, then 
  \[
  d(G,i)=\sum_{a+b=i-(\phi(pr)+\phi(qr)),~a\geqslant 1}{\phi(r)\choose a}{\phi(pq)\choose b}.
  \] 
  
  Case 8: 
  If  $x\notin V_{pq}, y\notin V_{pr}, z\notin V_{qr}$, then 
  \[
  d(G,i)={\phi(pq)\choose \phi(pq)}{\phi(pr)\choose \phi(pr)}{\phi(qr)\choose \phi(qr)}=1.
  \] 
  
 Therefore the result follows.
 
 \item[(ii)] 
  Every total dominating set contains some vertices of  $\overline{K}_{\phi(p)}$, $ \overline{K}_{\phi(q)}$ and $ \overline{K}_{\phi(r)}$. So for $ i\geqslant 3 $,  
 \[
  d_t(G,i)=\displaystyle\sum_{a+b+c+d=i,~a,b,c\geqslant 1}{\phi(p)\choose a}{\phi(q)\choose b}{\phi(r)\choose c}{\phi(pq)+\phi(pr)+\phi(qr)\choose d}.
  \]
  \qed
  \end{enumerate} 
 \end{proof}

  \begin{figure}[h]
  	\begin{minipage}{7.5cm}
  		\includegraphics[width=\textwidth]{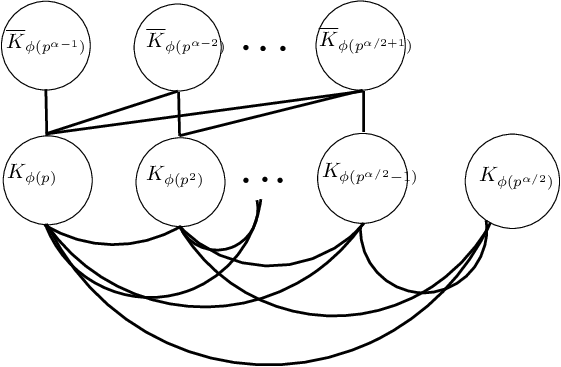}
  	\end{minipage}
  	\hspace{1cm}
  	\begin{minipage}{7.5cm}
  		\includegraphics[width=\textwidth]{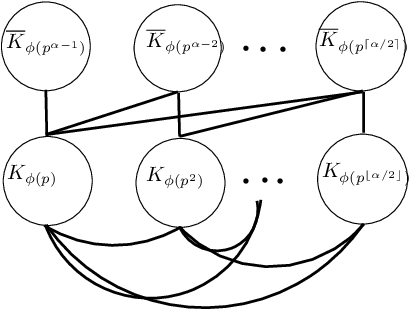}
  	\end{minipage}
  	\caption{The graph $\Gamma(\mathbb{Z}_{p^{\alpha}})$ for an even integer $ \alpha $ (left) and an odd integer $ \alpha $ (right).}\label{Z_pa}
  \end{figure}

Here, we obtain the number of the dominating sets of $\Gamma(\mathbb{Z}_{p^{\alpha}})$. 
 
\begin{theorem}
 Suppose that $ p>2 $ is a prime number. 
  \begin{enumerate}
  \item[(i)]
  If $\alpha>2$ is an odd integer then,
  \[
   d(\Gamma(\mathbb{Z}_{p^{\alpha}}),i)=
   \left\{
\begin{array}{lr}
 A+1
\quad\mbox{if ~~$i=\displaystyle\sum_{k=1}^{\lfloor\alpha/2\rfloor}\phi(p^{\alpha-k})$,}\\[15pt]
A 
\quad\mbox{~~~~otherwise.}\\ 
 \end{array}
\right.
 \]
where; $
A=\displaystyle\sum_{1\leqslant r\leqslant\lfloor\alpha/2\rfloor, a\geqslant 1}{\phi(p^r)\choose a}{n-\phi(p^r)-\displaystyle\sum_{k=1}^{r-1}\phi(p^k)+\phi(p^{\alpha-k})\choose i-a-\displaystyle\sum_{k=1}^{r-1}\phi(p^{\alpha-k})}
 $.
  \item[(ii)]
  If $\alpha>2$ is an even integer then,
  \[
   d(\Gamma(\mathbb{Z}_{p^{\alpha}}),i)= 
   \left\{
\begin{array}{lr}
 A 
\quad\mbox{if ~~$i\leqslant\displaystyle\sum_{k=1}^{(\alpha/2)-1}\phi(p^{\alpha-k})  $,}\\[15pt]
A+B 
\quad\mbox{~~~~otherwise.}\\ 
 \end{array}
\right.
 \]
 where;

\[
A=\displaystyle\sum_{1\leqslant r<\alpha/2, a\geqslant 1}{\phi(p^r)\choose a}{n-\phi(p^r)-\displaystyle\sum_{k=1}^{r-1}\phi(p^k)+\phi(p^{\alpha-k})\choose i-a-\displaystyle\sum_{k=1}^{r-1}\phi(p^{\alpha-k})} 
\] and

\[ 
B=\displaystyle\sum_{ i=a+\sum_{k=1}^{(\alpha/2)-1}\phi(p^{\alpha-k}) ,~a\geqslant 1}{\phi(p^{\alpha/2})\choose a}.
\]
  \end{enumerate} 
 \end{theorem}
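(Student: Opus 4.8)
The plan is to analyze the structure of $\Gamma(\mathbb{Z}_{p^\alpha})$ via the vertex partition into the sets $V_{p^k}$ for $k=1,\dots,\alpha-1$ supplied by Corollary \ref{induced}, and then count dominating sets directly by a case analysis on which of the ``high index'' classes a dominating set meets. First I would record that the proper divisors of $p^\alpha$ are exactly $p,p^2,\dots,p^{\alpha-1}$, that $|V_{p^k}|=\phi(p^{\alpha-k})$ by the lemma of Young, and — crucially — that by Corollary \ref{induced}(i) the induced graph on $V_{p^k}$ is $K_{\phi(p^{\alpha-k})}$ when $\alpha\mid 2k$ (i.e. $\alpha$ even and $k=\alpha/2$) and $\overline{K}_{\phi(p^{\alpha-k})}$ otherwise, while by Lemma (Chattopadhyay) $V_{p^i}$ and $V_{p^j}$ are completely joined iff $\alpha\le i+j$. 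Thus the ``large'' classes $V_{p^{\alpha-1}},V_{p^{\alpha-2}},\dots$ are dominated by almost everything, whereas the ``small'' classes $V_p,V_{p^2},\dots$ are independent sets whose only neighbors lie in the complementary large classes; a vertex $x\in V_{p^k}$ has $N[x]$ equal to the union of all $V_{p^j}$ with $j\ge \alpha-k$ (together with $x$ itself when $2k\ge\alpha$).

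Next I would translate ``$S$ dominates'' into the requirement that for every $k$ with $2k<\alpha$, the set $S$ must contain at least one vertex from $\bigcup_{j\ge \alpha-k}V_{p^j}$, equivalently from the union of the classes of ``large index'' that are the neighbors of $V_{p^k}$; and for the self-dominating classes ($2k\ge\alpha$) no such constraint is forced from them. The natural bookkeeping device is to look at the smallest index $r$ such that $S$ meets $V_{p^r}$ from the ``small side'' in a controlled way, i.e. to stratify dominating sets by the least $r\in\{1,\dots,\lfloor\alpha/2\rfloor\}$ for which $S\cap V_{p^r}\neq\emptyset$ while $S$ avoids $V_{p^1},\dots,V_{p^{r-1}}$; to still dominate those avoided small classes, $S$ must then contain \emph{all} of the forcing large classes $V_{p^{\alpha-1}},\dots,V_{p^{\alpha-(r-1)}}$, contributing the fixed block $\sum_{k=1}^{r-1}\phi(p^{\alpha-k})$ to the size, and on $V_{p^r}$ we pick $a\ge 1$ vertices in $\binom{\phi(p^r)}{a}$ ways, while the remaining vertices (of size $i-a-\sum_{k=1}^{r-1}\phi(p^{\alpha-k})$) may be chosen freely from all still-undetermined classes, whose total count is $n-\phi(p^r)-\sum_{k=1}^{r-1}\bigl(\phi(p^k)+\text{\small(already-fixed large class)}\bigr)$ — matching exactly the binomial expression for $A$ in the statement. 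Summing over $r$ and $a$ by the addition principle yields $A$; the ``$+1$'' in the odd case is the single dominating set obtained when $S$ avoids every small class $V_{p^1},\dots,V_{p^{\lfloor\alpha/2\rfloor}}$ (forcing $S\supseteq V_{p^{\alpha-1}}\cup\cdots\cup V_{p^{\lceil\alpha/2\rceil}}$, which has size $\sum_{k=1}^{\lfloor\alpha/2\rfloor}\phi(p^{\alpha-k})$ and is uniquely determined), and in the even case the analogous extremal configuration, because of the extra complete class $K_{\phi(p^{\alpha/2})}$ in the middle, instead contributes the whole block $B=\sum_{a\ge1}\binom{\phi(p^{\alpha/2})}{a}$ of sets of size $\ge \sum_{k=1}^{\alpha/2-1}\phi(p^{\alpha-k})+1$, explaining why $B$ only appears for $i$ above that threshold.

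I expect the main obstacle to be verifying that the case analysis is genuinely a partition — that every dominating set is counted exactly once and that the ``free choice'' pool for the tail of $S$ in stratum $r$ never secretly re-imposes a domination constraint (it does not, because every class of index $\ge\alpha-r$ is already in the neighborhood of $V_{p^r}$, hence auto-dominated, and every class of index $<r$ is excluded by the minimality of $r$), together with getting the index arithmetic in the second binomial coefficient exactly right, including the middle self-paired class in the even case. Once the partition is justified, the identity is a direct application of the addition principle and the binomial theorem, so the remainder is routine; I would close by checking the two smallest admissible $\alpha$ (e.g. $\alpha=3$ and $\alpha=4$) against a hand computation of $\Gamma(\mathbb{Z}_{p^3})$ and $\Gamma(\mathbb{Z}_{p^4})$ to confirm the boundary terms.
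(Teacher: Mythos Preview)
Your overall plan --- analyse the structure via the partition $V_{p^k}$ and then stratify dominating sets by which classes they meet --- is exactly the paper's approach. However, your stratification is reversed, and the key forcing step as you state it is false.

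You stratify by the least $r$ with $S\cap V_{p^r}\neq\emptyset$ (a \emph{small}-index class) and assert that avoiding $V_{p^1},\dots,V_{p^{r-1}}$ forces $S$ to contain \emph{all} of the large classes $V_{p^{\alpha-1}},\dots,V_{p^{\alpha-(r-1)}}$. That is not true: to dominate an avoided $V_{p^k}$ (with $k<r$), $S$ need only meet the union $\bigcup_{j\ge\alpha-k}V_{p^j}$, not contain every such class; a single vertex of $V_{p^{\alpha-1}}$ already dominates every small class. Your stratification is not even exhaustive: any dominating set contained in $\bigcup_{k>\lfloor\alpha/2\rfloor}V_{p^k}$ (for instance a single vertex of $V_{p^{\alpha-1}}$) avoids every small class and falls in no stratum $r$ of your scheme. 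There is also a cardinality slip throughout: $|V_{p^r}|=\phi(p^{\alpha-r})$, not $\phi(p^r)$, so ``pick $a\ge1$ vertices from $V_{p^r}$ in $\binom{\phi(p^r)}{a}$ ways'' does not match the formula you are trying to recover.

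The correct stratification, which is what the paper does, goes the other way: let $r$ be the least index with $S\cap V_{p^{\alpha-r}}\neq\emptyset$, so $S$ avoids the top $r-1$ classes $V_{p^{\alpha-1}},\dots,V_{p^{\alpha-(r-1)}}$ but meets $V_{p^{\alpha-r}}$. For each $k<r$ the whole neighbourhood of $V_{p^k}$, namely $\bigcup_{j\ge\alpha-k}V_{p^j}$, lies among the avoided classes, so $V_{p^k}\subseteq S$ is forced; this gives the fixed block $\sum_{k=1}^{r-1}|V_{p^k}|=\sum_{k=1}^{r-1}\phi(p^{\alpha-k})$ in the formula. One then chooses $a\ge1$ vertices from $V_{p^{\alpha-r}}$ (size $\phi(p^r)$), and since any such vertex dominates every remaining class $V_{p^k}$ with $k\ge r$, the rest of $S$ is genuinely free, giving the second binomial. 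The ``$+1$'' in the odd case is the unique set $S=V_{p^1}\cup\cdots\cup V_{p^{\lfloor\alpha/2\rfloor}}$ that avoids all the large classes, and in the even case the analogous extremal stratum instead involves the middle clique $V_{p^{\alpha/2}}$, yielding $B$. Once you swap ``small'' and ``large'' in your case split, your outline becomes essentially the paper's proof.
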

 
 \begin{proof}
 	  \begin{enumerate} 
\item[(i)] 
For an integer $ \alpha>2 $, the vertex set of $ \Gamma(\mathbb{Z}_{p^{\alpha}}) $ is the disjoint union of the following  sets $ V_{p^{1}},...,V_{p^{\lceil\alpha/2\rceil}},...,V_{p^{\alpha -1}} $. 
 \[
 V_{p^{1}}=\lbrace px : x=1,...,p^{\alpha -1}-1 , p\nmid x\rbrace,~~~~~~~~~~~~~~~~~~~~~~~~~~~\Gamma(V_p)=\overline{K}_{\phi(p^{\alpha -1})},
 \]
 \[
  V_{p^{2}}=\lbrace p^2x : x=1,...,p^{\alpha -2}-1 , p\nmid x\rbrace,~~~~~~~~~~~~~~~~~~~~~~~~~\Gamma(V_{p^2})=\overline{K}_{\phi(p^{\alpha -2})},
 \]
 \[
 \vdots~~~~~~~~~~~~~~~~~~~~~~~~~~~~~~~~~~~~~~~~~~~~~~~~~~~~~~~~~~~~~~~\vdots
 \]
  \[
 V_{p^{\lceil\alpha/2\rceil}}=\lbrace p^{\lceil\alpha/2\rceil}x : x=1,...,p^{\alpha-\lceil\alpha/2\rceil}-1 , p\nmid x\rbrace,~~~~~~\Gamma(V_{p^{\lceil\alpha/2\rceil}})=K_{\phi(p^{\lfloor\alpha/2\rfloor})},
 \]
  \[
 \vdots~~~~~~~~~~~~~~~~~~~~~~~~~~~~~~~~~~~~~~~~~~~~~~~~~~~~~~~~~~~~~~~\vdots
 \]
 \[
 V_{p^{\alpha -1}}=\lbrace p^{\alpha -1}x : x=1,...,p-1 \rbrace,~~~~~~~~~~~~~~~~~~~~~~~~~~~~~~~\Gamma(V_{p^{\alpha -1}})=K_{\phi(p^1)}.
 \]
 
 Since any vertex $ v\in V_{p^{\alpha -1}} $ is adjacent to all vertices of $\Gamma(\mathbb{Z}_{p^{\alpha}}) $, this implies that each vertex of $ V_{p^{\alpha-1}} $ forms an dominating set, so $\gamma( \Gamma(\mathbb{Z}_{p^{\alpha}}) )=1$ and $\gamma_t( \Gamma(\mathbb{Z}_{p^{\alpha}}) )=2$. So  we have $ d( \Gamma(\mathbb{Z}_{p^{\alpha}}),1)=\phi(p)  $ and 
 $ d_t( \Gamma(\mathbb{Z}_{p^{\alpha}}),2)={\phi(p)\choose 2}+\phi(p)(n-\phi(p))   $, where $ n=\mid V(\Gamma(\mathbb{Z}_{p^{\alpha}}))\mid $.

 The graph $ G=\Gamma(\mathbb{Z}_{p^{\alpha}}) $ is obtained by Corollary \ref{induced}, which have shown in Figure \ref{Z_pa}. 
First assume that $ \alpha>2 $ is an odd integer. For $  i > 1 $, to choose a dominating set of size $ i $ there are the following cases: 

Case 1: 
If $x\in V_{p^{\alpha -1}}$, then $d(G,i)=\sum_{a+b=i ,~a\geqslant 1}{\phi(p)\choose a}{n-\phi(p)\choose b}$. 

Case 2: 
If $x\in V_{p^{\alpha -2}}, x\notin V_{p^{\alpha -1}}$ then 

\[d(G,i)=\sum_{a+b=i-\phi(p^{\alpha-1}) ,~a\geqslant 1}{\phi(p^2)\choose a}{n-[(\phi(p)+\phi(p^2)+\phi(p^{\alpha-1})]\choose b}.
\]
\[
\vdots
\]

Case $\lfloor\frac{k}{2}\rfloor$: 
If $x\in V_{p^{\alpha -\lfloor\alpha/2 \rfloor }}, x\notin V_{p^{\alpha -k}}, 1\leqslant k<\lfloor\alpha/2 \rfloor$, then
\[
d(G,i)=\sum_{a+b=i-\sum_k\phi(p^{\alpha-k}) ,~a\geqslant 1}{\phi(p^{\lfloor\alpha/2\rfloor})\choose a}{\phi(p^{\lceil \alpha/2\rceil})\choose b}.
\]

Case k: 
If $x\notin V_{p^{\alpha -k}}, 1\leqslant k\leqslant\lfloor\alpha/2 \rfloor, i=\sum_k\phi(p^{\alpha-k})$   then  $d(G,i)=1$ and so,
\[
A=\sum_{1\leqslant r\leqslant\lfloor\alpha/2\rfloor, a\geqslant 1}{\phi(p^r)\choose a}{n-\phi(p^r)-\sum_{k=1}^{r-1}\phi(p^k)+\phi(p^{\alpha-k})\choose i-a-\sum_{k=1}^{r-1}\phi(p^{\alpha-k})}
\]
By addition principle we have the result.

\item[(ii)] 
 There are the same conditions of Part (i) to choose dominating set of size $ i>2 $ except two last cases. So,
\[
A=\sum_{1\leqslant r<\alpha/2, a\geqslant 1}{\phi(p^r)\choose a}{n-\phi(p^r)-\sum_{k=1}^{r-1}\phi(p^k)+\phi(p^{\alpha-k})\choose i-a-\sum_{k=1}^{r-1}\phi(p^{\alpha-k})} 
,\]

If $x\in V_{p^{\alpha/2}}, x\notin V_{p^{\alpha -k}}, 1\leqslant k<\alpha/2$  then $B=\sum_{ i=a+\sum_k\phi(p^{\alpha-k}) ,~a\geqslant 1}{\phi(p^{\alpha/2})\choose a}.$
  \qed
  \end{enumerate} 
   \end{proof}
 
\begin{theorem}
Let $ p>2 $ be a prime number. For an integer $ \alpha>2 $, the number of the  total dominating sets of graph $ \Gamma(\mathbb{Z}_{p^{\alpha}}) $ of order $ n $ is
\[
d_t(\Gamma(\mathbb{Z}_{p^{\alpha}}),i)=\sum_{a+b=i ,~a,b\geqslant 1}{\phi(p)\choose a}{n-\phi(p)\choose b}
\]
\end{theorem}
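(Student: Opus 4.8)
The plan is to read the total dominating sets of $G=\Gamma(\mathbb{Z}_{p^{\alpha}})$ straight off the structural description obtained in the previous theorem, so first I would record the two features of that graph that matter here. Recall that $V(G)$ is the disjoint union $V_{p^{1}}\cup\cdots\cup V_{p^{\alpha-1}}$ with $|V_{p^{k}}|=\phi(p^{\alpha-k})$, that a vertex of $V_{p^{i}}$ is adjacent to a vertex of $V_{p^{j}}$ precisely when $i+j\geqslant\alpha$ (Corollary~\ref{induced}), and that $\Gamma(V_{p^{k}})$ is a clique exactly when $2k\geqslant\alpha$. From this I would extract: (a)~every vertex of $V_{p^{\alpha-1}}$ is adjacent to all of the other $n-1$ vertices of $G$, and $V_{p^{\alpha-1}}$ induces $K_{\phi(p)}$; and (b)~since $\alpha>2$, the set $V_{p^{1}}$ is nonempty, independent, and the open neighbourhood of each of its vertices is exactly $V_{p^{\alpha-1}}$.

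The heart of the argument is the characterization of total dominating sets: a set $D\subseteq V(G)$ is a total dominating set of $G$ if and only if $D\cap V_{p^{\alpha-1}}\neq\varnothing$ and $D\cap\big(V(G)\setminus V_{p^{\alpha-1}}\big)\neq\varnothing$. The ``if'' part is a direct verification: writing $D=D_{1}\cup D_{2}$ with $\varnothing\neq D_{1}\subseteq V_{p^{\alpha-1}}$ and $\varnothing\neq D_{2}\subseteq V(G)\setminus V_{p^{\alpha-1}}$, every vertex outside $V_{p^{\alpha-1}}$ is adjacent to all of $D_{1}$ by (a), while every vertex of $V_{p^{\alpha-1}}$ is adjacent to all of $D_{2}$, since any vertex of $V_{p^{j}}$ with $j\geqslant 1$ is joined to all of $V_{p^{\alpha-1}}$; hence every vertex of $G$ has a neighbour in $D$. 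The ``only if'' part is where I expect the real work to be: using (b), totally dominating the nonempty set $V_{p^{1}}$ forces $D$ to meet $V_{p^{\alpha-1}}$, and then one must argue, using the way $V_{p^{\alpha-1}}$ sits inside $G$, that a vertex of $D\cap V_{p^{\alpha-1}}$ cannot be totally dominated unless $D$ also reaches outside $V_{p^{\alpha-1}}$. This is the one place where the description of $\Gamma(V_{p^{\alpha-1}})$ from Corollary~\ref{induced} has to be invoked carefully, and it is the main obstacle.

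Once the characterization is in hand, the counting is routine. Since $|V_{p^{\alpha-1}}|=\phi(p)$ and $|V(G)\setminus V_{p^{\alpha-1}}|=n-\phi(p)$, a total dominating set of size $i$ is exactly a choice of $a\geqslant 1$ vertices from $V_{p^{\alpha-1}}$ together with $b\geqslant 1$ vertices from the complement with $a+b=i$; summing the products of binomial coefficients over all admissible pairs $(a,b)$ yields
\[
d_t(\Gamma(\mathbb{Z}_{p^{\alpha}}),i)=\sum_{a+b=i,~a,b\geqslant 1}\binom{\phi(p)}{a}\binom{n-\phi(p)}{b},
\]
which in particular is $0$ for $i\leqslant 1$, in agreement with $\gamma_t(\Gamma(\mathbb{Z}_{p^{\alpha}}))=2$. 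Finally, I would remark that, unlike the domination polynomial of $\Gamma(\mathbb{Z}_{p^{\alpha}})$, this formula does not split according to the parity of $\alpha$, which is already transparent from the fact that the characterization only distinguishes the ``top'' class $V_{p^{\alpha-1}}$ from the remaining vertices.
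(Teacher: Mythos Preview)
Your characterization of total dominating sets is the heart of the argument, and the ``only if'' direction fails. You correctly deduce that any total dominating set $D$ must intersect $V_{p^{\alpha-1}}$ (since each vertex of the nonempty set $V_{p^{1}}$ has neighbours only in $V_{p^{\alpha-1}}$). But the second necessary condition you propose---that $D$ must also meet $V(G)\setminus V_{p^{\alpha-1}}$---is not true. Because $\alpha>2$ gives $2(\alpha-1)\geqslant\alpha$, the set $V_{p^{\alpha-1}}$ induces the clique $K_{\phi(p)}$ with $\phi(p)=p-1\geqslant 2$, and every vertex of $G$ is adjacent to every vertex of $V_{p^{\alpha-1}}$ other than itself. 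Hence any subset $D\subseteq V_{p^{\alpha-1}}$ with $|D|\geqslant 2$ is already a total dominating set: each $v\in D$ has a neighbour in $D\setminus\{v\}$. The step you flag as ``the main obstacle'' therefore cannot be completed as stated.

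A concrete witness: for $p=3$, $\alpha=3$ the graph $\Gamma(\mathbb{Z}_{27})$ has $n=8$, $V_{9}=\{9,18\}$ is an edge, and $\{9,18\}$ is a total dominating set lying entirely inside $V_{p^{\alpha-1}}$. Thus $d_t(\Gamma(\mathbb{Z}_{27}),2)=\binom{2}{2}+2\cdot 6=13$, whereas the sum in the statement gives only $\binom{2}{1}\binom{6}{1}=12$. This agrees with the value $d_t(\Gamma(\mathbb{Z}_{p^{\alpha}}),2)=\binom{\phi(p)}{2}+\phi(p)(n-\phi(p))$ recorded just before, so the formula undercounts by exactly the subsets of $V_{p^{\alpha-1}}$ of size $\geqslant 2$. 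The paper's own one-line proof supplies only the first half of your ``only if'' direction (that $D$ must meet $V_{p^{\alpha-1}}$) and does not address the second half, so the gap is not peculiar to your write-up.
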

 \begin{proof}
 Since  every vertex of null subgraph $ \Gamma(V_p) $ is adjacent to any vertex of complete subgraph $ \Gamma(V_{p^{\alpha-1}}) $ exactly, so any total dominating set of size $ i\geqslant2 $ of the graph $ \Gamma(\mathbb{Z}_{p^{\alpha}}) $ must containe some vertex of complete subgraph $ \Gamma(V_{p^{\alpha-1}}) $.
  \qed
 \end{proof}


\end{document}